\newcommand{\R}{{\mathbb R}}
\newcommand{\Z}{{\mathbb Z}}
\newcommand{\RR}{{\mathbb R}}
\newcommand{\ZZ}{{\mathbb Z}}
\newcommand{\zz}{{\ZZ}}
\newcommand{\rr}{{\RR}}
\newcommand{\nn}{{\mathbb N}}
\newcommand{\vones}{\mathbbm{1}}
\DeclareMathOperator{\conv}{conv}
\newcommand\core{\operatorname{core}}
\newcommand\Rep{\operatorname{Rep}}
\newcommand\GL{{\operatorname{GL}}} % general linear group; usage: \GL_n\RR
\newcommand\Fix{\operatorname{Fix}} % fixed points
\newcommand\ILP{\operatorname{ILP}} % ILP; usage: \ILP(A,b,c)
\newcommand{\Altern}[1]{\mathcal{A}_{#1}}
\newcommand{\Symmet}[1]{\mathcal{S}_{#1}}
\newcommand\convSet{S} % convex set in \RR^n
\theoremstyle{plain}
\newtheorem{theorem}{Theorem}
\newtheorem{proposition}[theorem]{Proposition}
\newtheorem{corollary}[theorem]{Corollary}
\theoremstyle{definition}
\newtheorem{definition}[theorem]{Definition}
\newtheorem{example}[theorem]{Example}
\newtheorem{remark}[theorem]{Remark}
\newcommand{\hk}[1]{H_{\vones,#1}}
\newcommand{\barymap}{\Phi}
\newcommand\id{\operatorname{id}}
\newcommand{\bigSetOf}[2]{\left\{#1\,:\,#2\right\}}
\title[Exploiting Symmetry in Integer Convex Optimization]{Exploiting Symmetry in Integer Convex Optimization using Core Points}
\author[K. Herr, T. Rehn, and A. Sch\"urmann]{Katrin Herr, Thomas Rehn, and Achill Sch\"urmann}
\keywords{symmetry, integer linear programs}
\subjclass[2000]{90C10, 20C99} 
\address{%
Technische Universität Darmstadt,
Fachbereich Mathematik, %AG Optimierung,
%Algorithmic Discrete Mathematics
Dolivostraße 15,
64293 Darmstadt, 
Germany}
\email{herr@mathematik.tu-darmstadt.de}
\address{% 
Institute of Mathematics,
University of Rostock,
18051 Rostock,
Germany}
\email{\{achill.schuermann,thomas.rehn\}@uni-rostock.de}
\begin{document}

\begin{abstract}We consider convex programming problems with integrality constraints
that are invariant under a linear symmetry group. To decompose such problems we introduce the new concept of core points, i.e.,
integral points whose orbit polytopes are lattice-free.
For symmetric integer linear programs we describe two algorithms based on this
decomposition. Using a characterization of core points for direct products
of symmetric groups, we show that prototype implementations can compete
with state-of-the-art commercial solvers, and solve an open MIPLIB problem. 
\end{abstract}

\maketitle
%\tableofcontents

\section{Introduction}

Symmetry is a fascinating and central subject in science.
It occurs frequently in the formulation of optimization problems.
In convex, and particularly in linear programming the size of a 
problem formulation can be decreased if the problem formulation
is invariant under a linear symmetry group.
In that case, the optimization problem
can simply be restricted to the linear subspace that is pointwise preserved 
by the symmetry group (see Section~\ref{sec:symconvsets}).

If some of the variables are constrained to integers, the situation becomes much more difficult.
In fact, symmetries are often considered rather problematic in that case, as
standard methods like {\em branch-and-bound}
or {\em branch-and-cut} (see \cite{schrijver-1986} and \cite{wolsey-1998})
have to solve a lot of equivalent sub-problems in such cases.
%Therefore, symmetric optimization problems are often considered as difficult test cases.
%Nevertheless, in recent years several researchers have discovered
Nevertheless, in recent years methods for exploiting symmetries in integer linear programming have been developed;
see for example
\cite{margot-2003}, \cite{friedman-2007}, \cite{bm-2008},
\cite{kp-2008}, \cite{olrs-2008}, \cite{lmt-2009}.
These specific methods fall into two main classes:
They either modify the standard branching approach,
using isomorphism tests or isomorphism free generation to
avoid solving equivalent subproblems; or they use techniques to
cut down the original symmetric problem to a lesser symmetric one,
which contains at least one element of each orbit of solutions.
For further reading we refer to the excellent survey~\cite{margot-2009}.
By now, two of the
leading commercial solvers, \cite{cplex} and \cite{gurobi}, 
have included some techniques
to detect and use available symmetry.

In contrast to the aforementioned approaches we use the rich geometric
properties of the involved symmetric convex sets (polyhedra).
Each symmetric convex set decomposes into a
lower dimensional part that is fixed pointwise by the symmetry group 
and symmetric slices orthogonal to it. 
We show that these slices contain integral points if and only if they contain certain integral {\em core points}.
A core point is an integral point
for which the convex hull of its orbit does not contain integral points other than the orbit points themselves
(see Definition~\ref{def:corepoint}).
In the case of a one dimensional pointwise fixed subspace,
and with the full symmetric group~$S_n$ %or the alternating group $A_n$ 
acting transitively on the coordinates of an integer linear program in $\R^n$,
these core points have been used in~\cite{bhj-2011}, to obtain a fast \emph{Core Point Algorithm}.
%running linear in the number of constraints and quadratic in the dimension. 
In this paper we generalize their concept of core points, 
for the design of algorithms that work for more general symmetry groups.
Based on core points, 
we describe two straightforward algorithms for solving integer linear programs.
A first approach can 
outperform state-of-the art commercial solvers on highly symmetric problems.
With a second approach we solve a benchmark 
problem from~\cite{miplib} that is currently marked ``open''.

The paper is organized as follows.
In Section~\ref{sec:symconvsets} we introduce 
some notation and recall essential facts about linear symmetry groups,
convex sets and their interplay.
In Section~\ref{sec:coresets} we introduce core points and
collect some basic results.
In Section~\ref{sec:characterization},
we give a complete characterization of core points with respect to direct
products of symmetric groups. 
In this case, all core points are found to be ``near'' the
pointwise fixed subspace. 
We show that this ideal situation cannot be expected for all groups, 
by describing arbitrarily ``far off'' core points for each cyclic group of even order. 
In Section~\ref{sec:sym-fibs} and Section~\ref{sec:param} we present two simple approaches for solving symmetric integer linear programs that use the concept of core points.
The first approach is based on enumeration of lattice points 
in the pointwise fixed subspace.
%and in a sense generalizes the \emph{Core Point Algorithm} in~\cite{bhj-2011}.
The second approach uses a parametrization of core points.
In Section~\ref{sec:experiments} we provide some information on computational experiments
that were conducted with prototype implementations of the proposed algorithms.
In Section~\ref{sec:discussion} we discuss some open problems
and give an outlook on potential future developments, 
based on the idea of core points.

\section{Linear symmetries of convex sets}\label{sec:symconvsets}

There is a canonical approach for handling symmetries
in symmetric convex optimization problems 
without integer restrictions.
% for instance, linear and semidefinite programming,  
Given a finite linear group $\Gamma\leq\GL_n(\RR)$, we consider the \emph{fixed~space}
\[
\Fix_\Gamma(\RR^n) = \bigSetOf{x\in\RR^n}{\gamma x=x\text{ for all }\gamma\in\Gamma}.
\] 
Note that the fixed space can be computed as the intersection %$\bigcap_{i} \Eig(\gamma_i, 1)$ 
of all group generator eigenspaces with eigenvalue~$1$. %$\{\gamma_1, \dots, \gamma_s\} \subseteq \Gamma$.
Let
$\barymap_\Gamma:\ \RR^n\to\Fix_\Gamma(\RR^n)$, $v\mapsto\beta(\Gamma v)$
be the orthogonal linear projection,
where $\beta(\Gamma v) := \frac{1}{\lvert \Gamma\rvert}\sum_{\gamma \in \Gamma}
\gamma v$ is the barycenter of the $\Gamma$-orbit of $v$. 
With respect to $\Gamma$, the space $\R^n$ decomposes into the
pointwise fixed $\Fix_\Gamma(\RR^n)$ and symmetric 
{\em fibers} (preimages of $\barymap_\Gamma$) orthogonal to it.
This {\em fibration} of $\RR^n$ carries over to symmetric convex sets.

A subset $\convSet$ of $\RR^n$ is \emph{$\Gamma$-symmetric} if
$\gamma\convSet=\convSet$ for every $\gamma\in\Gamma$. 
For every element $v$ of a $\Gamma$-symmetric set~$\convSet$, the
full orbit $\Gamma v$ is also contained in~$\convSet$.
If~$\convSet$ is convex, then for all $v\in\convSet$
also the orbit barycenter $\beta(\Gamma v)$ is contained in $\convSet$.
This implies that the intersection $\convSet\cap\Fix_\Gamma(\RR^n)$ for a convex set $\convSet$ is equal to the projection $\barymap_\Gamma(\convSet)$.
Thus we have the following proposition, which is an essential ingredient of symmetry exploiting techniques for convex optimitization problems;
see for instance~\cite{bhj-2011}, \cite{gp-2004}, \cite{bgsv-2012} or \cite{liberti-2012}.

\begin{proposition}\label{prop:reduce_conv_set_to_fix}
Let $\Gamma\leq\GL_n(\RR)$. Then
a $\Gamma$-symmetric convex set $\convSet\subseteq\RR^n$ is nonempty (feasible) if and only if the intersection $\convSet\cap\Fix_\Gamma(\RR^n)$ is nonempty (feasible).
\end{proposition}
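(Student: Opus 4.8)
The plan is to prove the two implications separately: one direction is immediate, and the other rests on the orbit-barycenter observation already recorded in the paragraph preceding the statement. First I would dispose of the trivial direction. Since $\convSet\cap\Fix_\Gamma(\RR^n)$ is a subset of $\convSet$, nonemptiness of the intersection forces nonemptiness of $\convSet$. Neither convexity nor symmetry is needed for this half.

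For the substantive direction I would assume $\convSet$ is nonempty and fix an arbitrary $v\in\convSet$. Because $\convSet$ is $\Gamma$-symmetric, the full orbit $\Gamma v$ lies in $\convSet$, and since $\convSet$ is convex the barycenter $\beta(\Gamma v)=\frac{1}{\lvert\Gamma\rvert}\sum_{\gamma\in\Gamma}\gamma v$, being a convex combination of orbit points, belongs to $\convSet$ as well. It then remains to verify that this barycenter lies in the fixed space, and that is the one step worth spelling out: for every $\delta\in\Gamma$, linearity of $\delta$ together with the reindexing $\gamma\mapsto\delta\gamma$ (a bijection of $\Gamma$, as $\Gamma$ is a group) gives $\delta\,\beta(\Gamma v)=\frac{1}{\lvert\Gamma\rvert}\sum_{\gamma\in\Gamma}(\delta\gamma)v=\beta(\Gamma v)$. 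Hence $\beta(\Gamma v)\in\convSet\cap\Fix_\Gamma(\RR^n)$, so the intersection is nonempty.

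The hard part, such as it is, amounts only to that reindexing step confirming $\Gamma$-invariance of the barycenter; everything else is a direct appeal to convexity and to the definition of $\Gamma$-symmetry. I regard the parenthetical references to feasibility as carrying no extra content, since a convex feasibility problem is feasible precisely when its solution set is nonempty, so the same two lines cover both formulations simultaneously.
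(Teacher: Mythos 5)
Your proof is correct and takes essentially the same route as the paper, which proves the nontrivial direction in the paragraph preceding the proposition: the orbit $\Gamma v$ lies in $\convSet$ by symmetry, the barycenter $\beta(\Gamma v)$ lies in $\convSet$ by convexity, and it lies in $\Fix_\Gamma(\RR^n)$. The reindexing step you spell out is precisely the fact the paper leaves implicit when it treats $v\mapsto\beta(\Gamma v)$ as a projection onto the fixed space, so your write-up is, if anything, slightly more complete.
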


\section{Core sets}\label{sec:coresets}

We now turn to the case where variables are integers.
Here, not all linear symmetry groups occur, as many of them do not preserve the integral lattice. As usually done in integer linear programming, we restrict ourselves to the case of 
symmetry groups that are subgroups of the full symmetric group $\Symmet{n}$, acting
on $\RR^n$ via permuting the standard basis vectors.
%All results presented below also apply to problems with symmetry groups in
%$\Orth_n\ZZ$ that are conjugate to subgroups of $\Symmet{n}$ in $\Orth_n\ZZ$.
%%by considering the conjugate problem.
%In such a case conjugation simply corresponds to a replacement of some variables $x_i$ 
%with their negative $-x_i$ to ensure that the symmetry group is a permutation of the standard basis.
We like to note, that throughout this paper we always work with linear representations of groups. 
%The isomorphism type of an abstract group does not contain enough information for our purposes. 
If we consider a subgroup of the symmetric group $\Symmet{n}$, we always implicitly assume that it is given by a standard representation with permutation matrices of size $n\times n$.

A statement similar to Proposition~\ref{prop:reduce_conv_set_to_fix} does not hold in general for problems with integrality constraints.
The barycenter of the orbit of an integer point does not need to be integral. 
With the following notion, however, we can obtain a statement similar to
Proposition~\ref{prop:reduce_conv_set_to_fix}.

\begin{definition} \label{def:corepoint}
  Given a group $\Gamma\leq\Symmet{n}$, a \emph{core point with respect to $\Gamma$} is an integral point
  $z\in\ZZ^n$ such that the convex hull of its $\Gamma$-orbit does not contain any further
  integral points, that is, $\conv(\Gamma z)\cap \ZZ^n=\Gamma z$. 
  
  A \emph{core set with respect to $\Gamma$} of a $\Gamma$-symmetric convex set $S$ is the set of all core points contained in $S$. We denote this core set by $\core_\Gamma(S)$. 
\end{definition}

Note that our definition of core points generalizes the notion used in~\cite{bhj-2011}.
They consider the case of $\Gamma = \Symmet{n}$ or $\Gamma = \Altern{n}$ with core points being defined as the integral points closest to the one-dimensional fixed space.

In the literature the polytope $\conv(\Gamma x)$ is known as \emph{orbit
  polytope} (see for example~\cite{MR1239510}). 
To the best of our knowledge, the lattice-freeness of such objects has not been studied before. 
\begin{remark}\label{rem:orbit-polytope-sphere}
Since $\Gamma$ is an orthogonal group,
all vertices of the orbit polytope $\conv(\Gamma z)$ with $\Gamma \leq \Symmet{n}$ lie on a Euclidean sphere centered at the orbit barycenter $\beta(\Gamma z)$ in the fixed space.
\end{remark}
Using this remark, we obtain the following discrete version of Proposition~\ref{prop:reduce_conv_set_to_fix}.
\begin{theorem}\label{theo:reduce_conv_set_Z_to_core_set}  
  Let $\Gamma\leq\Symmet{n}$ and $S \subset \rr^n$ be a $\Gamma$-symmetric convex set.
  Then $S$ contains integer points if and only if the intersection $S\cap\core_\Gamma(\rr^n)$ is nonempty.
\end{theorem}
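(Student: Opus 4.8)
The plan is to prove the two implications separately, with essentially all of the work in one direction. The ``if'' direction is immediate: if $S\cap\core_\Gamma(\rr^n)$ contains a point $z$, then by definition $z$ is an integral point lying in $S$, so $S$ contains integer points. For the ``only if'' direction, suppose $S$ contains an integer point $z_0$. Since $S$ is $\Gamma$-symmetric and convex, the whole orbit $\Gamma z_0$ lies in $S$, and hence so does the orbit polytope $\conv(\Gamma z_0)\subseteq S$. This polytope is bounded, so it contains only finitely many integer points. Among all integer points in $\conv(\Gamma z_0)$ I would pick one, call it $z^\ast$, that minimizes the Euclidean distance to the fixed space $\Fix_\Gamma(\rr^n)$. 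I claim $z^\ast$ is a core point; since $z^\ast\in\conv(\Gamma z_0)\subseteq S$, this point then witnesses $S\cap\core_\Gamma(\rr^n)\neq\emptyset$.

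The key geometric input is that $\barymap_\Gamma$ is constant on an orbit polytope. Writing any $w\in\conv(\Gamma z_0)$ as a convex combination of orbit points and using linearity of $\barymap_\Gamma$ together with $\barymap_\Gamma(\gamma z_0)=\beta(\Gamma z_0)$, one gets $\barymap_\Gamma(w)=\beta(\Gamma z_0)=:c$, a single fixed point $c\in\Fix_\Gamma(\rr^n)$. Consequently, for every integer point $w$ in $\conv(\Gamma z_0)$ its distance to the fixed space equals $\lVert w-c\rVert$, because $w-c$ is orthogonal to the fixed space. By Remark~\ref{rem:orbit-polytope-sphere} the vertices of any orbit polytope $\conv(\Gamma z^\ast)$ lie on a sphere about $c$ of radius $\lVert z^\ast-c\rVert$; moreover, since each orbit point is an extreme point of the surrounding ball, the vertices of $\conv(\Gamma z^\ast)$ are exactly the orbit points $\Gamma z^\ast$, and every point of $\conv(\Gamma z^\ast)$ lying on that sphere is already a vertex.

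Now I would argue that $z^\ast$ is a core point by contradiction. If it were not, then $\conv(\Gamma z^\ast)$ would contain an integer point $z^{\ast\ast}\notin\Gamma z^\ast$. Since $z^\ast\in\conv(\Gamma z_0)$ and orbit polytopes are $\Gamma$-symmetric, the orbit $\Gamma z^\ast$, and hence $\conv(\Gamma z^\ast)$, is contained in $\conv(\Gamma z_0)$; thus $z^{\ast\ast}$ is an integer point of $\conv(\Gamma z_0)$ as well. Being a non-vertex point of $\conv(\Gamma z^\ast)$, the point $z^{\ast\ast}$ does not lie on the sphere, so $\lVert z^{\ast\ast}-c\rVert<\lVert z^\ast-c\rVert$, contradicting the minimality of the distance from $z^\ast$ to the fixed space. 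Hence $z^\ast$ is a core point and the proof is complete.

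The hard part is the strict-decrease step, namely ruling out that $z^{\ast\ast}$ lies on the sphere without being an orbit point. This is exactly what strict convexity of the Euclidean ball (equivalently, Remark~\ref{rem:orbit-polytope-sphere}) provides, and it is why the argument needs the orbit polytope to be inscribed in a sphere rather than merely bounded. The finiteness of integer points in the bounded polytope $\conv(\Gamma z_0)$ is what guarantees the minimizer $z^\ast$ exists in the first place.
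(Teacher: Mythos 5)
Your proof is correct and takes essentially the same approach as the paper's: the paper also passes to $\conv(\Gamma z_0)\subseteq S$ and selects an integer point $y$ in it of minimal Euclidean norm, then uses Remark~\ref{rem:orbit-polytope-sphere} to conclude $y$ is a core point --- and since every point $w$ of the orbit polytope projects to the same point $c\in\Fix_\Gamma(\rr^n)$, so that $\lVert w\rVert^2=\lVert c\rVert^2+\lVert w-c\rVert^2$, minimizing the norm is the same as your minimizing the distance to the fixed space. The details you spell out (constancy of $\barymap_\Gamma$ on the orbit polytope and strict convexity of the ball) are precisely what the paper leaves implicit.
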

\begin{proof}
  Let $z$ be an integer point of $S$.
  Since $S$ is invariant under $\Gamma$, the set of all orbit points $\Gamma z$ is also contained in $S$. 
  By convexity of $\convSet$, the convex hull $\conv(\Gamma z)$ is contained in $\convSet$ as well.
  We observe that $\core_\Gamma(C_1) \subseteq \core_\Gamma(C_2)$ for any two $\Gamma$-symmetric convex sets $C_1 \subseteq C_2$.
  Thus $\core_\Gamma(\conv(\Gamma z)) \subseteq \core_\Gamma(S) = S \cap \core_\Gamma(\rr^n)$.
  Since $\conv(\Gamma z)$ contains an integer point, it must also contain an integer point~$y$ with minimal Euclidean norm.
  By Remark~\ref{rem:orbit-polytope-sphere} the only integral points in the orbit polytope $\conv(\Gamma y)$ can be its vertices.
  Thus $y$ is a core point and is contained in the core set of $\conv(\Gamma z)$.
 The claim of the theorem follows.
\end{proof}

Note that, by definition, the core set $\core_\Gamma(S)$ of any $\Gamma$-symmetric set $S$ is also $\Gamma$-symmetric.
This important property implies that it is enough to consider only one arbitrary representative for every $\Gamma$-orbit of $\core_\Gamma(S)$.
We denote by $\Rep(\convSet)$ a set of representatives of a symmetric set $\convSet$.
Thus we have the following corollary, which simplifies the application of Theorem~\ref{theo:reduce_conv_set_Z_to_core_set} in practice.
\begin{corollary}\label{cor:ilp-feasibility-coreset}
 Let $\Gamma\leq\Symmet{n}$ and $S \subset \rr^n$ be a $\Gamma$-symmetric convex set.
 Then $S$ contains integer points if and only if the intersection $S\cap\Rep(\core_\Gamma(\rr^n))$ is nonempty.
\end{corollary}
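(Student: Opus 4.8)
The plan is to deduce the corollary directly from Theorem~\ref{theo:reduce_conv_set_Z_to_core_set} using the $\Gamma$-symmetry of the core set, so that passing to a set of orbit representatives loses no feasibility information. The theorem already tells us that $S$ contains integer points if and only if $S\cap\core_\Gamma(\rr^n)\neq\emptyset$. Since $\Rep(\core_\Gamma(\rr^n))\subseteq\core_\Gamma(\rr^n)$, the implication from $S\cap\Rep(\core_\Gamma(\rr^n))\neq\emptyset$ to $S$ containing integer points is immediate, because any representative is itself a core point and hence an integer point of $S$.

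For the converse, I would use the observation recorded just before the corollary, namely that $\core_\Gamma(S)$ is $\Gamma$-symmetric. First I would note that $S$ is $\Gamma$-symmetric by hypothesis and $\core_\Gamma(\rr^n)$ is $\Gamma$-symmetric by Definition~\ref{def:corepoint}, so their intersection $S\cap\core_\Gamma(\rr^n)=\core_\Gamma(S)$ is a $\Gamma$-symmetric set. Suppose $S$ contains an integer point; by the theorem there is some $z\in S\cap\core_\Gamma(\rr^n)$. The key step is then that, because this intersection is $\Gamma$-invariant, its full orbit $\Gamma z$ lies in $S\cap\core_\Gamma(\rr^n)$, and by the definition of a representative set exactly one orbit point belongs to $\Rep(\core_\Gamma(\rr^n))$. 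That representative is therefore an element of $S\cap\Rep(\core_\Gamma(\rr^n))$, which is consequently nonempty.

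The only genuine subtlety, and the point I would state carefully rather than gloss over, is the bookkeeping around what $\Rep(\cdot)$ means: it picks one point from each $\Gamma$-orbit, so the representative chosen for the orbit $\Gamma z$ need not be $z$ itself but is guaranteed to lie in $S$ because the whole orbit does. Here I would explicitly invoke that $S$ is $\Gamma$-symmetric (so $\Gamma z\subseteq S$ whenever $z\in S$) to ensure the chosen representative remains feasible; this is exactly the property that makes restricting to representatives sound. I do not expect any real obstacle, since the argument is a formal consequence of the theorem together with $\Gamma$-invariance of both $S$ and the core set; the entire content is that symmetry lets us replace an orbit by a single representative without affecting feasibility.
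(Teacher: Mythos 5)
Your proof is correct and follows essentially the same route as the paper: the paper derives this corollary from Theorem~\ref{theo:reduce_conv_set_Z_to_core_set} together with the observation that $\core_\Gamma(S)=S\cap\core_\Gamma(\rr^n)$ is $\Gamma$-symmetric, so the whole orbit of any core point of $S$ stays in $S$ and in particular its chosen representative does. Your explicit handling of the bookkeeping point---that the representative of the orbit $\Gamma z$ need not be $z$ itself but still lies in $S$ by $\Gamma$-invariance---is exactly the content the paper leaves implicit.
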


By definition, the core set $\core_\Gamma(\convSet)$ can be obtained as the intersection~$\core_\Gamma(\rr^n) \cap \convSet$.
Therefore, the computation of $\core_\Gamma(\rr^n)$, which depends only on $\Gamma$, can be done in a separate step.
This allows the reuse of the result for problems that share the same linear representation of~$\Gamma$ as a symmetry group.
In practice it suffices to keep a list of representatives because we similarly obtain $\Rep(\core_\Gamma(\convSet))$ for every $\Gamma$-symmetric convex set $\convSet$ as the intersection~$\Rep(\core_\Gamma(\rr^n))\cap \convSet$.

Note that for binary variables the search space is reduced by the same amount as by previously known techniques like adding symmetry-breaking inequalities or isomorphism pruning.
For general integer variables, however, the reduction to core sets goes beyond pruning isomorphic nodes from the branch-and-bound tree.

We conclude this section with some elementary properties of core sets.
\begin{remark}\label{rem:core-sets-properties}
Given a group $\Gamma\leq\Symmet{n}$ and a $\Gamma$-symmetric convex set $\convSet\subseteq\RR^n$, it holds:
\begin{itemize}
 \item $\core_\Gamma(\convSet) \subseteq \core_{\Gamma'}(\convSet)$ for every subgroup $\Gamma'\leq\Gamma$. \\
  In particular, $\core_{\Symmet{n}}(\convSet) \subseteq \core_\Gamma(\convSet)$ for all $\Gamma \leq \Symmet{n}$.
 \item $\core_\Gamma(\convSet + z) = \core_\Gamma(\convSet) + z$ for every $z \in \Fix_{\Gamma}(\rr^n) \cap \zz^n$
 \item $\core_\Gamma(S) = -\core_\Gamma(-S)$
\end{itemize}
\end{remark}

\section{Characterization of core sets}\label{sec:characterization}

For the full symmetric group $\Symmet{n}$ and the alternating group $\Altern{n}$ on $n$ variables a complete characterization of $\Rep(\core_\Gamma(\rr^n))$ directly follows from~\cite[Theorem~3]{bhj-2011}:

\begin{remark}\label{remark:characterization_of_core_sets_for_An_Sn} 
  Let $H_{\vones, k}:=\bigSetOf{x\in\RR^n}{\langle x,\vones\rangle=k}$, where
  $0\leq k < n$, and $\vones$ is the all ones vector.
  Let $e_i$ denote the $i$-th standard basis vector in
  $\RR^n$. Given $\Gamma=\Symmet{n}$ or $\Gamma=\Altern{n}$, the core set
  $\core_\Gamma(H_{\vones, k})$ is the set $H_{\vones, k} \cap \{0,1\}^n$ of all $0/1$-vectors with $k$ ones, 
  i.e. vertices of a hypersimplex.
  The set of representatives for the complete core set is given by 
\[\Rep(\core_\Gamma(\rr^n))=\bigSetOf{ t\cdot\vones + \sum_{i=1}^ke_i}{0\leq
  k < n,t\in\ZZ}\]
  because the core set of every hyperplane $H_{\vones, k}$ can be represented by only one vector
  % by the $n$-fold and $(n-2)$-fold transitivity of $\Symmet{n}$ and $\Altern{n}$.
\end{remark}

\subsection{Core sets for direct products of groups}\label{subsec:core_sets_for_dir_prod}

A natural question is how the group structure is reflected in its core sets.
In this section we give an answer for direct products of groups.

Let $\Gamma=\Gamma_1\times\dots\times\Gamma_m$ be a subgroup of $\Symmet{n}$.
We assume that each $\Gamma_i$ acts on a coordinate subspace $X_i$ of $\RR^n$
and fixes all other coordinates. Thus we have a decomposition of $\RR^n$ into a
Cartesian product $X_1 \times \cdots \times X_m$. The subspaces~$X_i$ are pairwise disjoint by the assumed product structure of $\Gamma$.
We call the $X_i$ \emph{canonically associated} to $\Gamma$.

Convexity and the concept of direct products go well together, as
$\conv(X)=\conv(X_1)\times\dots\times\conv(X_m)$. This property carries over
to core sets.

\begin{theorem}\label{theo:direct_prod_of_core_sets}
 Let $\Gamma=\Gamma_1\times\dots\times\Gamma_m$ be a subgroup of $\Symmet{n}$ 
and $X_1, \dots, X_m$ the canonically associated coordinate subspaces of $\RR^n$.
Then $\core_\Gamma(Y_1 \times \cdots \times Y_m) =
  \core_{\Gamma_1}(Y_1) \times \cdots \times \core_{\Gamma_m}(Y_m)$ for
  arbitrary $\Gamma_i$-symmetric, convex subsets $Y_i$ of $X_i$.
\end{theorem}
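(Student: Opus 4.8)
The plan is to reduce everything to the statement that orbits, convex hulls, and the integer lattice all factor along the product decomposition. First I would observe that for any point $z = (z_1, \dots, z_m)$ with $z_i \in X_i$, every group element $\gamma = (\gamma_1, \dots, \gamma_m) \in \Gamma$ acts coordinatewise, so the orbit itself factors as a Cartesian product $\Gamma z = \Gamma_1 z_1 \times \dots \times \Gamma_m z_m$. Combining this with the stated identity $\conv(X) = \conv(X_1) \times \dots \times \conv(X_m)$ (applied to $X = \Gamma z$) gives
\[
\conv(\Gamma z) = \conv(\Gamma_1 z_1) \times \dots \times \conv(\Gamma_m z_m).
\]

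Next, since the $X_i$ are the coordinate subspaces canonically associated to $\Gamma$, the integer lattice respects the decomposition: a point of $\rr^n$ is integral if and only if each of its $X_i$-components is, i.e.\ $\zz^n = (X_1 \cap \zz^n) \times \dots \times (X_m \cap \zz^n)$. Because intersection distributes over Cartesian products, I obtain
\[
\conv(\Gamma z) \cap \zz^n = \bigl(\conv(\Gamma_1 z_1) \cap \zz^n\bigr) \times \dots \times \bigl(\conv(\Gamma_m z_m) \cap \zz^n\bigr).
\]

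With these two factorizations in hand I would characterize the $\Gamma$-core points. By definition $z$ is a core point with respect to $\Gamma$ exactly when $\conv(\Gamma z) \cap \zz^n = \Gamma z$, and by the above this is the equality of two Cartesian products, factor $i$ being $\conv(\Gamma_i z_i) \cap \zz^n$ on the left and $\Gamma_i z_i$ on the right. The decisive step is then the elementary fact that two Cartesian products of sets coincide if and only if the corresponding factors coincide, which is valid here because every factor is nonempty: each contains the integral orbit point $z_i$. Hence $z$ is a $\Gamma$-core point if and only if each $z_i$ is a $\Gamma_i$-core point, yielding $\core_\Gamma(\rr^n) = \core_{\Gamma_1}(X_1) \times \dots \times \core_{\Gamma_m}(X_m)$. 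I expect this nonemptiness bookkeeping to be the only genuinely delicate point, since without it a product-equals-product argument can fail (empty factors destroy the implication); everything else is a direct consequence of facts already recorded in the excerpt.

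Finally I would intersect with the convex set. Using $\core_\Gamma(\convSet) = \core_\Gamma(\rr^n) \cap \convSet$ together with the factorization of the core set just established and the distributivity of intersection over Cartesian products, I get
\[
\core_\Gamma(Y_1 \times \dots \times Y_m) = \bigl(\core_{\Gamma_1}(X_1) \cap Y_1\bigr) \times \dots \times \bigl(\core_{\Gamma_m}(X_m) \cap Y_m\bigr) = \core_{\Gamma_1}(Y_1) \times \dots \times \core_{\Gamma_m}(Y_m),
\]
where the last equality uses $Y_i \subseteq X_i$ so that $\core_{\Gamma_i}(X_i) \cap Y_i$ is precisely the set of $\Gamma_i$-core points lying in $Y_i$. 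This completes the argument.
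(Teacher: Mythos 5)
Your proof is correct and rests on essentially the same facts as the paper's: the orbit, its convex hull, and the integer lattice all factor as Cartesian products along the coordinate subspaces $X_i$, so core-ness is decided componentwise. The only difference is presentational --- the paper proves the two inclusions by contraposition, explicitly building an integral non-orbit witness in the product from a non-core component (and vice versa), whereas you encapsulate that same step in the lemma that Cartesian products of nonempty sets coincide iff their factors do, with the nonemptiness supplied by the orbit points $z_i$ themselves.
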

\begin{proof}
  Let $Y:=Y_1\times\dots\times Y_m$. The orbit of $Y$ under $\Gamma$
  is the direct product of the orbits $\Gamma_iY_i$, thus
  $\conv(\Gamma
  Y)=\conv(\Gamma_1Y_1)\times\dots\times\conv(\Gamma_mY_m)$.
  Now, consider an integer
  point $y\in Y$ with a component $y_i\in Y_i$ which is not a core
  point with respect to $\Gamma_i$. Then there exists an integer point
  in $Y_i$, which is a strict convex combination of orbit points
  $\Gamma_i y_i$. It induces an (integral) convex combination of points
  in $\Gamma y$. Thus $y$ is not a core point with respect to
  $\Gamma$. The same argument shows that a point $y\in Y$ which is not
  a core point, always contains a component $y_i\in Y_i$ that is not a
  core point with respect to $\Gamma_i$. This completes the proof.
\end{proof}
Hence, for groups with known core sets
Theorem~\ref{theo:direct_prod_of_core_sets} yields a concrete
characterization of the core set of the direct product of such
groups. For the following result about products of symmetric and
alternating groups we use the characterization given in
Remark~\ref{remark:characterization_of_core_sets_for_An_Sn}.

\begin{corollary}\label{cor:characterization_of_core_sets_for_direct_prod_of_An_Sn}
Let $\Gamma:=\Gamma_1\times\dots\times\Gamma_d$ be a subgroup of $\Symmet{n}$ with either $\Gamma_i=\Symmet{k_i}$ or $\Gamma_i=\Altern{k_i}$ for $1\leq i\leq d$.
$\RR^n$ is decomposed into canonically $\Gamma$-associated disjoint subspaces~$X_i$ of dimension $k_i$. Then
  \begin{equation}\label{eq:core-sets-An-Sn}
\bigSetOf{(t_1\vones +\sum_{j=1}^{r_1}e_j,\dots,t_d\vones+\sum_{j=1}^{r_d}e_j)}{1\leq i\leq d,\;0\leq r_i < k_i,\; t_i\in\ZZ }
  \end{equation}
describes a set of representatives for $\core_\Gamma(\rr^n)$.
Here, we denote the elements of $\RR^n=X_1\times \dots \times X_d$ by tuples $(x_1,\dots, x_d)$, with $x_i\in X_i$.
\end{corollary}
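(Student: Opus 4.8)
The plan is to combine the two results immediately preceding the corollary. Theorem~\ref{theo:direct_prod_of_core_sets} tells me that for a direct product $\Gamma = \Gamma_1 \times \cdots \times \Gamma_d$ acting on the canonically associated subspaces $X_i$, the core set factors as a Cartesian product of the individual core sets. Applying this with $Y_i = X_i$ (which is $\Gamma_i$-symmetric and convex, indeed all of $X_i \cong \RR^{k_i}$), I obtain
\[
\core_\Gamma(\RR^n) = \core_{\Gamma_1}(X_1) \times \cdots \times \core_{\Gamma_d}(X_d).
\]
Since each $\Gamma_i$ is either $\Symmet{k_i}$ or $\Altern{k_i}$ acting on the $k_i$-dimensional space $X_i$, Remark~\ref{remark:characterization_of_core_sets_for_An_Sn} supplies a complete description of each factor $\core_{\Gamma_i}(X_i)$, together with an explicit set of orbit representatives, namely the vectors $t_i \vones + \sum_{j=1}^{r_i} e_j$ with $0 \le r_i < k_i$ and $t_i \in \ZZ$ (where now $\vones$ and the $e_j$ are understood inside $X_i$).

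The one genuinely non-routine step is to check that a set of representatives for a Cartesian product of symmetric sets is obtained by taking the Cartesian product of representatives for the individual factors. First I would observe that the $\Gamma$-orbit of a point $(x_1, \dots, x_d)$ is exactly $\Gamma_1 x_1 \times \cdots \times \Gamma_d x_d$, because the factors act on disjoint coordinate blocks and commute. Hence two points of $\core_\Gamma(\RR^n)$ lie in the same $\Gamma$-orbit if and only if their $i$-th components lie in the same $\Gamma_i$-orbit for every $i$. This gives a bijection between $\Gamma$-orbits on the product and $d$-tuples of $\Gamma_i$-orbits, so choosing one representative in each factor and forming all product tuples yields exactly one representative per $\Gamma$-orbit. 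Taking all combinations of the per-factor representatives from Remark~\ref{remark:characterization_of_core_sets_for_An_Sn}, indexed independently by $(r_i, t_i)$ for $1 \le i \le d$, produces precisely the set displayed in~\eqref{eq:core-sets-An-Sn}.

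I expect the main obstacle — such as it is — to be bookkeeping rather than mathematics: one must be careful that the all-ones vector $\vones$ and the basis vectors $e_j$ appearing in the $i$-th slot refer to the local coordinates of $X_i$ (so $\vones$ is the all-ones vector of length $k_i$ and $j$ ranges over the $k_i$ coordinates of that block), and that the independent ranges $0 \le r_i < k_i$ and $t_i \in \ZZ$ in the tuple correspond correctly to the independent choices of representative in each factor. Once the product-of-representatives principle is in place, the corollary follows by direct substitution of the Remark's description into the product decomposition from Theorem~\ref{theo:direct_prod_of_core_sets}, with no further computation required.
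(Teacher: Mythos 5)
Your proposal is correct and follows exactly the route the paper intends: the paper states this corollary without a separate proof, treating it as an immediate consequence of Theorem~\ref{theo:direct_prod_of_core_sets} applied with $Y_i = X_i$ together with the per-factor description in Remark~\ref{remark:characterization_of_core_sets_for_An_Sn}. Your additional verification that $\Gamma$-orbits on the product are exactly products of $\Gamma_i$-orbits (so that products of representatives form a set of representatives) is the right bookkeeping step that the paper leaves implicit.
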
 
Note that
Corollary~\ref{cor:characterization_of_core_sets_for_direct_prod_of_An_Sn}
also applies to situations where symmetric or alternating groups act only
on some subspaces of $\RR^n$ as we can split the remaining
subspace into a direct product of one-dimensional subspaces, with an
action of $\Symmet{1}$ on each of them. 
In the corollary, $d$ is the dimension of the fixed space~$\Fix_\Gamma(\RR^n)$.
Note that in Theorem~\ref{theo:direct_prod_of_core_sets}, the number $m$ of factors is equal to the dimension $d$ of the fixed space if and only if all groups $\Gamma_i$ are transitive (see the following Section~\ref{sec:transitive-groups}).

In Sections~\ref{sec:sym-fibs}~and~\ref{sec:param} we will make use of the
core set characterization given in Corollary~\ref{cor:characterization_of_core_sets_for_direct_prod_of_An_Sn} in two different approaches for solving
integer linear programs whose symmetry groups are of this special
form.

\subsection{Transitive group actions}\label{sec:transitive-groups}
Remark~\ref{rem:core-sets-properties} states that the core set with respect to a group is contained in the core set with respect to each of its subgroups. 
Every group is a subgroup of the direct product of transitive groups (cf. \cite[Sec~5.5]{MR0103215}). 
%ALSO: Every group is the subdirect product of transitive groups. 
Groups acting on the standard basis of~$\RR^n$ are transitive if all basis vectors are in the same orbit.
In this section we focus on these basic building blocks, that is, on permutation groups which act transitively.

For a transitive group $\Gamma$ the fixed space is the linear span of the all-ones vector $\vones$.
For every integral point $z\in\zz^n$ there is a unique $k = \langle z, \vones \rangle \in \zz$ such that $z$ is contained in the hyperplane $\hk{k} = \bigSetOf{x\in\RR^n}{\langle x,\vones\rangle=k}$.
Thus $\core_\Gamma(\rr^n)$ is the infinite union of the core sets of $\hk{k}$ for all $k \in \zz$.
Hence, it is enough to study core sets of the hyperplanes~$\hk{k}$.
We know from Remark~\ref{rem:core-sets-properties} that core sets are translation invariant under some conditions.
Especially, for a transitive group $\Gamma\leq\Symmet{n}$ we conclude that $\core_\Gamma(\hk{k+n}) = \core_\Gamma(\hk{k} + \vones) = \core_\Gamma(\hk{k}) + \vones$ for every hyperplane $\hk{k}$.
Thus it suffices to study core sets of hyperplanes with $k \in \{0, 1, \dots, n-1\}$.
Since $\core_\Gamma(\hk{0}) = \{0\}$ and $\core_\Gamma(\hk{n-k}) = - \core_\Gamma(\hk{k}) + \vones$, it is in fact enough to understand the core sets of $\hk{k}$ for $k \in \{1, \dots, \left\lfloor \frac{n}{2} \right\rfloor\}$.
As we can deduce all core sets from these,
we say that $\Gamma$ has a finite core set if $$\bigcup_{k=1}^{\left\lfloor \frac{n}{2} \right\rfloor} \core_\Gamma(\hk{k})$$ is a finite set.

From Remark~\ref{remark:characterization_of_core_sets_for_An_Sn} we know that the
symmetric and alternating group have finite core sets because $\hk{k}$ contains ${n
  \choose k}$ core points that all lie in the same orbit. 
%%More general, it can be shown that finite subgroups of $S_n$, which act irreducibly on $\R^n$, have finite core sets.

The following example shows that not all groups have a finite core set.
In particular, cyclic groups give rise to symmetric lattice-free simplices.
These simplices have asymptotically unbounded diameter and volume and can be thought of as symmetric siblings of Reeve's famous lattice-free simplices (cf.~\cite{MR0095452}).
More precisely, the following example shows that the core set of all cyclic groups of even order consists of infinitely many simplices and is thus infinite.

\begin{example}[Infinite Core Sets]\label{ex:cyclic-group-even-infinite-core-set}
 Let $\Gamma = \langle \sigma \rangle$ be a cyclic group of order $n = 2m \geq 4$ where $\sigma = (n\,\dots\,3\,2\,1)$.
 We will show that for every choice of parameters $a_2, \dots, a_{m} \in \zz$ the orbit polytope $P = \conv(\Gamma z)$ with $$z = (1,a_2,a_3,\dots,a_{m},0,-a_2,-a_3,\dots,-a_{m})$$ has no integer points besides its vertices.
 
 To see this let $v = \sum_{i=1}^{n} \lambda_i \sigma^{i-1} (z) \in P \cap \zz^n$ be an integer vector in $P$, represented as a convex combination with $\lambda_i \in [0,1]$ and $\sum_{i=1}^n \lambda_i = 1$.
 For each of the $n$ coordinates of $v$, this gives an equation in $\lambda_1,\dots,\lambda_n$.
 Summing up the first and the $(m+1)$-st equation we obtain:
 \begin{equation}\label{eq:cyclic-group-conv-coordinate-sum}
  \begin{split}
   & \left \langle \sum_{i = 1}^{n} \lambda_i \sigma^{i-1} \left(z\right) , e_1 + e_{m+1}\right \rangle\\
  =& \left ( \lambda_1 + \sum_{i = 2}^{m} \lambda_i a_i + \sum_{i = 2}^{m} \lambda_{i+m} (-a_i) \right)\\
  &\quad + \left ( 
     \sum_{i = 2}^{m} \lambda_{i} (-a_i) + \lambda_{m+1} + \sum_{i = 2}^{m} \lambda_{i+m} a_i \right) \\
  =& \lambda_1 + \lambda_{m+1}
  \end{split}
 \end{equation}
 Since all coordinates of $v$ are integer, the expression in~\eqref{eq:cyclic-group-conv-coordinate-sum} must be integer as well.
 This implies that $\lambda_1 +  \lambda_{m+1}$ must be zero or one.
 The same can be said about the sum of $\lambda_j$ and $\lambda_{m+j}$ in general by adding the $j$-th and $(m+j)$-th equation.
 Because $v$ is a convex combination, there is exactly one index $j$ such that $\lambda_j + \lambda_{m+j} = 1$ and all other $\lambda_i$ are equal to~$0$.
 Thus it must hold that $\lambda_j \sigma^{j-1}(z) + \lambda_{m+j} \sigma^{m+j-1}(z) \in \zz^n$.
 In particular, $\langle \lambda_j \sigma^{j-1}(z) + \lambda_{m+j} \sigma^{m+j-1}(z), e_j \rangle = \lambda_j$ must be integer.
 Hence the convex combination in $v$ must be trivial and $v$ is always a vertex of $P$.
 Thus $P$ cannot have integer points besides its vertices.

 Since the orbit polytope $P$ is contained in the hyperplane $\hk{1}$, it has dimension $n-1$.
 The full-dimensional simplex $P'=\conv(\{0\} \cup P)$ is still symmetric and does not contain integer points except its vertices.
 Note that the volume of $P'$ can be computed as the absolute value of a determinant of a circulant matrix (cf.~\cite{MR543191}).
 By looking at the eigenvalues of this matrix, we conclude that for fixed $n$ these families of symmetric simplices parametrized by $a_2, \dots, a_m$ have unbounded volume.
%$\blacksquare$
\end{example}

\section{Symmetric fibrations of Integer Linear Programs}\label{sec:sym-fibs}

In this and the following section we turn to the application of core sets to integer linear programming. 
We consider integer linear programs (ILPs) of the following form:
\begin{equation*}
  \begin{array}{ll}
    \max & \langle c, x\rangle\\
    \textrm{s.t.} & Ax \le b \, , \ x \in \ZZ^n  \, ,
  \end{array}
\end{equation*}
with $A \in \rr^{m\times n}$, $b \in \rr^m$, and objective $c \in \rr^n$. 
To abbreviate such an instance we use the notation $\ILP(A,b,c)$. 
The corresponding polyhedron is denoted by~$P=\{ x \in \RR^n \, : \,  Ax \le b \}$. 
As previously mentioned, we only consider symmetry groups of integer linear programs of dimension $n$ that are subgroups of the symmetric group $\Symmet{n}$, acting on $\R^n$ by permuting coordinates.
\begin{definition}
  An integer linear program $\ILP(A,b,c)$ of dimension $n$ over a polyhedron $P$ is \emph{$\Gamma$-symmetric} with respect to a group $\Gamma\leq\Symmet{n}$ if the following two conditions holds:
  First, $Ax \leq b$ if and only if $A \gamma x\leq b $ for all $\gamma \in \Gamma$ and $x\in \zz^n$.
  Second, $\langle c, x\rangle = \langle c, \gamma x\rangle$  for all $\gamma \in \Gamma$ and $x\in P \cap \zz^n$.
  The group $\Gamma$ is then called a \emph{symmetry group} of $\ILP(A,b,c)$.
\end{definition}
Our first approach is a generalization of the \emph{Core Point Algorithm} by Bödi, Herr, and Joswig. 
In~\cite{bhj-2011}, the authors consider transitive group actions, which lead to a one-dimensional fixed space, the span of the all ones vector. 
They decompose the problem by intersecting the feasible region with all affine hyperplanes orthogonal to the fixed space containing integer points, and check the integer feasibility of these intersections. 
For the alternating group and the full symmetric group on all variables they show that it suffices to test one core point per intersection. 
In order to generalize this approach to fixed spaces of arbitrary dimensions we use a similar decomposition.

Given a symmetry group $\Gamma$ of an integer linear program, consider the projection $\Phi_\Gamma : \rr^n \to \Fix_\Gamma(\rr^n)$ onto the fixed space 
as introduced in Section~\ref{sec:symconvsets}. 
The projection $\barymap_\Gamma(\zz^n)$ of all integer vectors forms a lattice, which we denote by $\Lambda$.
For $x \in \Lambda$, that is, for a projected integer vector, we call the set of pre-images $\barymap_\Gamma^{-1}(x)$ a \emph{$\Gamma$-fiber}, or simply a \emph{fiber}.  
All feasible points of a $\Gamma$-symmetric integer linear program $\ILP(A,b,c)$ are contained in $\Gamma$-fibers that intersect the corresponding polyhedron~$P$.
\begin{remark}\label{rem:fiber-props}
The objective function $c$ is constant on each $\Gamma$-fiber.  
Moreover, $\Gamma$-fibers are $\Gamma$-invariant since $\barymap_\Gamma(x)=\barymap_\Gamma(y)$ for any $y\in\Gamma x$.  
The dimension of a $\Gamma$-fiber is equal to $\dim(\ker(\barymap_\Gamma))=n-\dim(\Fix_\Gamma(\RR^n))$.
\end{remark}
Thus, to solve an ILP it suffices to find a $\Gamma$-fiber with maximal objective value such that its intersection with $P$ contains an integer point. 
A naïve first approach is to simply enumerate all $\Gamma$-fibers and successively test their integer feasibility.
Algorithm~\ref{alg:sym-fib} summarizes this idea.
\begin{algorithm}[tb]
  \dontprintsemicolon
  \linesnumbered
  \KwIn{$\Gamma$-symmetric $\ILP(A,b,c)$ with bounded polyhedron $P$}
  \KwOut{optimal solution of $\ILP(A,b,c)$ or ``infeasible''}
  $c_{\text{opt}} \leftarrow -\infty$\;
  \ForEach{$z \in \barymap_\Gamma(P) \cap \Lambda$}{\label{algstep:sym-fib_enum}
  $F \leftarrow \barymap_\Gamma^{-1}(z)$\;
    \If{$\langle c,z\rangle > c_{\text{opt}}$ {\bf and} $F\cap P \cap \zz^n$ nonempty }{\label{algstep:sym-fib_feasibility}
      $z_{\text{opt}} \leftarrow \text{arbitrary element of } F\cap P \cap \zz^n$\;
      $c_{\text{opt}} \leftarrow \langle c,z\rangle$\;
    }
  }
  \Return{$z_{\text{opt}}$ or ``infeasible''}
  \caption{Naïve fiber enumeration of $\ILP(A,b,c)$}
  \label{alg:sym-fib}
\end{algorithm}
Note that we require boundedness of the polyhedron $P$ to guarantee termination of the algorithm. 
The two computational challenges are the enumeration of all lattice points in $\barymap_\Gamma(P) \cap \Lambda$ in Step~\ref{algstep:sym-fib_enum} and testing integer feasibility of~$F\cap P$ in Step~\ref{algstep:sym-fib_feasibility}. 

Because of the symmetry and since we project onto the fixed space, we can easily obtain an inequality (facet) description of the projected polyhedron $\barymap_\Gamma(P)$ by computing the barycenter of each orbit of facet normals of the original polytope (cf.~\cite[Theorem~1]{bhj-2011}).
The lattice $\Lambda=\barymap_\Gamma(\ZZ^n)$ in the fixed space turns out to be always an isometric embedding of an integer lattice~$\ZZ^d$ with scaled coordinate axes.
%(Scaling factors of coordinates are $1/\sqrt{k_i}$, for each block of $k_i$ coordinates that the group acts on transitively, $n=k_1+\ldots + k_d$).
Therefore, we have to deal with integer lattice point enumeration in dimension~$d$ of the fixed space.

As a refinement of the algorithm we could first solve an integer linear program in dimension~$d$ over $\barymap_\Gamma(P) \cap \Lambda$ and enumerate the lattice points in the intersection with the affine hyperplane $H$ of the resulting objective value. 
If the corresponding fibers do not contain feasible integer points for the original problem, we cut off the halfspace defined by $H$ and iterate the procedure.
However, in the worst case all lattice points in $\barymap_\Gamma(P) \cap \Lambda$ need to be enumerated. 
To address the second challenge, the integer feasibility check for the $\Gamma$-symmetric set~$F\cap P$ in Step~\ref{algstep:sym-fib_feasibility}, we use core sets. 
The following corollary is immediate from Theorem~\ref{theo:reduce_conv_set_Z_to_core_set}.
\begin{corollary}\label{cor:fiber-feasibility-coreset}
  Let $\ILP(A,b,c)$ be a $\Gamma$-symmetric integer linear program over the polyhedron~$P$. 
  Further, let $x\in\Lambda$ and $F = \barymap_\Gamma^{-1}(x)$. 
  Then there is an integral vector in $F\cap P$ if and only if the intersection $P\cap\Rep(\core_\Gamma(F))$ contains one.
\end{corollary}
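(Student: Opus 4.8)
The plan is to derive Corollary~\ref{cor:fiber-feasibility-coreset} directly from Theorem~\ref{theo:reduce_conv_set_Z_to_core_set}, by verifying that the set $F\cap P$ is itself a $\Gamma$-symmetric convex set, so that the theorem applies to it, and then translating the statement about $\core_\Gamma(F\cap P)$ into the claimed statement about $P\cap\Rep(\core_\Gamma(F))$. First I would observe that $F=\barymap_\Gamma^{-1}(x)$ is $\Gamma$-invariant by Remark~\ref{rem:fiber-props}, and that it is convex, being the preimage of a point under the linear map $\barymap_\Gamma$ (equivalently, an affine subspace parallel to $\ker(\barymap_\Gamma)$). Since $P$ is convex and $\Gamma$-symmetric (the polyhedron of a $\Gamma$-symmetric ILP), the intersection $F\cap P$ is again convex and $\Gamma$-symmetric. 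Thus Theorem~\ref{theo:reduce_conv_set_Z_to_core_set} applies with $S=F\cap P$, giving that $F\cap P$ contains an integer point if and only if $(F\cap P)\cap\core_\Gamma(\rr^n)$ is nonempty.

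The next step is a bookkeeping reconciliation between $\core_\Gamma(F)$ and $\core_\Gamma(\rr^n)$. By the definition of the core set of a symmetric convex set, $\core_\Gamma(F)=F\cap\core_\Gamma(\rr^n)$, and intersecting further with $P$ gives $(F\cap P)\cap\core_\Gamma(\rr^n)=P\cap\core_\Gamma(F)$. So the feasibility criterion becomes: $F\cap P$ contains an integer point if and only if $P\cap\core_\Gamma(F)$ is nonempty. It then remains to replace $\core_\Gamma(F)$ by its representatives $\Rep(\core_\Gamma(F))$. Here I would invoke the same reasoning used for Corollary~\ref{cor:ilp-feasibility-coreset}: the core set $\core_\Gamma(F)$ is $\Gamma$-symmetric, and $P$ is $\Gamma$-symmetric, so $P$ contains a core point of $F$ if and only if it contains a full $\Gamma$-orbit of such a point, hence if and only if it contains at least one orbit representative. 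Therefore $P\cap\core_\Gamma(F)$ is nonempty exactly when $P\cap\Rep(\core_\Gamma(F))$ is, and combining with the previous equivalence yields the corollary.

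The argument is essentially a chain of equivalences, so I do not anticipate any serious obstacle; the only point requiring a little care is the passage from the full core set to a set of representatives. The subtlety is that this reduction relies on $P$ being $\Gamma$-symmetric so that membership of a single orbit point in $P$ forces membership of the whole orbit, which is precisely what lets one representative stand in for the entire orbit. I would make sure to state explicitly that $P$ is $\Gamma$-symmetric (equivalently, that $Ax\le b$ iff $A\gamma x\le b$ for all $\gamma\in\Gamma$ on the relevant integer points) before invoking the representative reduction, since the conclusion would fail without it. Once that symmetry is in place, the corollary is immediate, matching the paper's remark that it ``is immediate from Theorem~\ref{theo:reduce_conv_set_Z_to_core_set}.''
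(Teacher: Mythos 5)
Your proposal is correct and takes essentially the same route the paper intends: the paper offers no separate argument, declaring the corollary ``immediate from Theorem~\ref{theo:reduce_conv_set_Z_to_core_set}'', and your chain (observing $F\cap P$ is a $\Gamma$-symmetric convex set via Remark~\ref{rem:fiber-props}, applying the theorem, identifying $\core_\Gamma(F)=F\cap\core_\Gamma(\rr^n)$, and passing to representatives exactly as in Corollary~\ref{cor:ilp-feasibility-coreset}) is precisely that immediate deduction spelled out. Your closing caution about needing the $\Gamma$-symmetry of $P$ (in the integer-point sense of the paper's ILP symmetry definition) for the representative reduction is well placed, since the paper uses this fact silently.
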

Thus we can check the integer feasibility of the fiber section $F \cap P$ by testing at most $\lvert\Rep(\core_\Gamma(F))\rvert$ many points for containment in~$P$.
Under certain circumstances we know that the set $\Rep(\core_\Gamma(F))$ is small, as the following considerations show.

If the automorphism group $\Gamma$ is a direct product of groups $\Gamma_1, \dots, \Gamma_k$, then every $\Gamma$-fiber $F$ is the Cartesian product of $\Gamma_i$-fibers $F_i$.
Thus by Theorem~\ref{theo:direct_prod_of_core_sets} we know that $\core_\Gamma(F)$ is the product of the fiber core sets $\core_{\Gamma_i}(F_i)$.
In particular, the set of representatives $\Rep(\core_\Gamma(F))$ is a direct product of fiber core set representatives $\Rep(\core_{\Gamma_i}(F_i))$.
For a direct product of symmetric products, we therefore have the following theorem.
\begin{theorem}\label{thm:ilp-symmetric-product}
 Let $\Gamma = \Symmet{k_1} \times \cdots \times \Symmet{k_d}$ be a subgroup of an ILP over a polytope $P \subset \rr^n$ given by $m$ linear constraints.
 Assume that the set $L=\barymap_{\Gamma}(P)\cap \Lambda$ (with $\Lambda=\barymap_{\Gamma}(\Z^n)$) is given.
 Then there is an $O(\lvert L \rvert \cdot mn)$-time algorithm that either finds an optimal solution or detects integer infeasibility.
\end{theorem}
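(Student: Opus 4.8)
The plan is to run Algorithm~\ref{alg:sym-fib} (naïve fiber enumeration) and to show that, once $L=\barymap_\Gamma(P)\cap\Lambda$ is available as assumed, each of its iterations costs only $\order{mn}$. The loop in Step~\ref{algstep:sym-fib_enum} is executed exactly $\card{L}$ times, so the total running time is $\card{L}$ times the per-fiber work, and the whole problem reduces to bounding the latter. Boundedness of $P$ (it is a polytope) together with finiteness of $L$ guarantees termination, and correctness is inherited from Corollary~\ref{cor:fiber-feasibility-coreset}.

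The key step is to observe that for $\Gamma=\Symmet{k_1}\times\cdots\times\Symmet{k_d}$ every fiber contains, up to symmetry, exactly one core point. Each factor $\Symmet{k_i}$ acts transitively on its block $X_i$ of $k_i$ coordinates, so $\Fix_\Gamma(\rr^n)$ is $d$-dimensional and every $z\in L$ is constant on each block. Writing $s_i\in\zz$ for the coordinate sum that $z$ prescribes on block $i$, the $\Gamma_i$-fiber $F_i$ is the slice of $X_i$ with coordinate sum $s_i$. By Theorem~\ref{theo:direct_prod_of_core_sets} the fiber core set factors as $\core_\Gamma(F)=\core_{\Symmet{k_1}}(F_1)\times\cdots\times\core_{\Symmet{k_d}}(F_d)$, and by Remark~\ref{remark:characterization_of_core_sets_for_An_Sn} each factor is a single $\Symmet{k_i}$-orbit (the vertices of a shifted hypersimplex). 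Hence $\Rep(\core_\Gamma(F))$ is a single point $x^\ast$, given explicitly by Corollary~\ref{cor:characterization_of_core_sets_for_direct_prod_of_An_Sn}: on block $i$ it equals $t_i\vones+\sum_{j=1}^{r_i}e_j$, where $s_i=t_ik_i+r_i$ with $0\le r_i<k_i$.

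With this single representative the per-fiber work is immediate. From $z$ one reads off each $s_i$, computes $t_i=\lfloor s_i/k_i\rfloor$ and $r_i=s_i-t_ik_i$, and assembles $x^\ast$ in $\order{n}$ time. By Corollary~\ref{cor:fiber-feasibility-coreset} the fiber meets $P\cap\zz^n$ exactly when $x^\ast\in P$, and testing $Ax^\ast\le b$ is a single matrix-vector product costing $\order{mn}$. Since the objective is constant on the fiber (Remark~\ref{rem:fiber-props}), evaluating $\langle c,z\rangle$ and performing the comparison and possible update of Step~\ref{algstep:sym-fib_feasibility} cost only $\order{n}$. Each iteration therefore runs in $\order{mn}$, and summing over the $\card{L}$ fibers yields the claimed $\order{\card{L}\cdot mn}$ bound.

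The only genuine obstacle is the uniqueness-of-representative step; the rest is bookkeeping. I expect the two points needing care to be the following. First, that each $s_i$ is integral, which holds because $z\in\barymap_\Gamma(\zz^n)$ forces integer block sums and simultaneously guarantees that the slice $F_i$ contains a core point at all. Second, that the block decomposition of $\Gamma$ is part of the input, so that locating the coordinates of each block and writing down $x^\ast$ really is linear in $n$ rather than hiding extra cost. Once these are settled, correctness follows from Corollary~\ref{cor:fiber-feasibility-coreset} and the time bound from the two cost estimates above.
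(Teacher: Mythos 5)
Your proposal is correct and follows essentially the same route as the paper's proof: run Algorithm~\ref{alg:sym-fib}, use Theorem~\ref{theo:direct_prod_of_core_sets} together with Remark~\ref{remark:characterization_of_core_sets_for_An_Sn} to conclude that $\Rep(\core_\Gamma(F))$ is a single point for each fiber, and invoke Corollary~\ref{cor:fiber-feasibility-coreset} to reduce each fiber check to one membership test costing $\order{mn}$. The explicit construction of the representative via $s_i = t_i k_i + r_i$ and your remarks on integrality of the block sums are sound elaborations of details the paper leaves implicit.
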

\begin{proof}
 In Algorithm~\ref{alg:sym-fib} we check for integer feasibility of $F \cap P$, for every fiber $F$ that intersects $P$.
 As every such fiber corresponds to an element of $L$, we have to perform at most $\lvert L\rvert$ many of these checks.
 By our previous considerations and Remark~\ref{remark:characterization_of_core_sets_for_An_Sn}, we know that $\Rep(\core_\Gamma(F))$ consists of only one element because $\lvert\Rep(\core_{\Gamma_i}(F_i))\rvert = 1$ for every fiber $F_i$ of $\Gamma_i = \Symmet{k_i}$.
 Thus every fiber feasibility check can be performed in $O(mn)$ time by Corollary~\ref{cor:fiber-feasibility-coreset}.
\end{proof}

Note that in Theorem~\ref{thm:ilp-symmetric-product} the difficulties are hidden in the computation of the set $L = \Phi_\Gamma(P)\cap\Lambda$ of all lattice points in the projected polytope.
Further research is needed to improve this rather naïve approach.
More elaborate methods could make use of the structure of the lattice $\Lambda$ and include more polyhedral information.

% \begin{algorithm}[tb]
%   \dontprintsemicolon
%   \linesnumbered
%   \KwIn{$\ILP(A,b,c)$ with symmetry group $\Gamma$}
%   \KwOut{optimal solution of $\ILP(A,b,c)$ or ``infeasible''}
%   $z_{\text{opt}} \leftarrow \emptyset$\;
%   $k \leftarrow \infty$\;
%   \While{$\max\limits_{z \in \{x \in \barymap_\Gamma(P) \cap \Lambda \;:\; c^tx \leq k\} } c^tz$ is feasible with solution $z_p$}{
%     \ForEach{$z \in \{x \in \barymap_\Gamma(P) \cap \Lambda \;:\; c^tx=c^tz_p\}$}{
%   $F \leftarrow \barymap_\Gamma^{-1}(z)$\;
%       \If{$F\cap P$ feasible}{
%         \Return{$z_p$}
%       }
%     }
%     $k \leftarrow c^tz_p - 1$\;
%   }
%   \Return{``infeasible''}
%   \caption{Enumerate fibers by objective value}
%   \label{alg:core-set-enumerate-layer-fiber}
% \end{algorithm}

\section{Core set parametrization}\label{sec:param}

In this section we explore a different approach to solve symmetric integer linear programs,
which is also based on core sets.

By Corollary~\ref{cor:ilp-feasibility-coreset} we know that a $\Gamma$-symmetric ILP over a polyhedron $P \subset \rr^n$ is feasible if and only if it contains an element of the core set $\core_\Gamma(\rr^n)$.
In particular, every feasible ILP has an optimal solution in $\core_\Gamma(\rr^n)$.
The symmetry of  the optimization problem even guarantees that we 
can always find an optimal solution in $\Rep(\core_\Gamma(\rr^n))$.
Thus we may solve the original ILP under the additional constraint that the solution is such a core set representative.
A natural approach to model this constraint would be to add linear inequalities. 
This would be similar to adding symmetry breaking constraints to ensure that solutions lie in a fundamental domain of the symmetry group (cf.~\cite{friedman-2007}, \cite{kp-2008}). 
In general, finding such inequalities is not easy and may even be practically impossible.

We propose a different way to model the constraint that the solution is a core set representative.
We introduce a parametrization of the core set, which is particularly interesting for symmetry groups with small core sets, for instance, in case of a direct product of symmetric groups.
We assume that $\Gamma$ contains a direct product $\Gamma' := \Symmet{k_1} \times \cdots \times \Symmet{k_d}$ of symmetric groups, where $n=\sum_{i=1}^d k_i$ and $d$ is the dimension of the fixed space $\Fix_{\Gamma'}(\rr^n)$.
Note that we can always find such a decomposition by choosing some of the $k_i$ to be~$1$.
From Corollary~\ref{cor:characterization_of_core_sets_for_direct_prod_of_An_Sn} we know a set of representatives for $\core_{\Gamma'}(\rr^n)$ and thus also for its subset $\core_{\Gamma}(\rr^n)$.
Hence, we have the following theorem.
\begin{proposition}\label{prop:parametrization-symmetric-product}
Every ILP with symmetry group $\Symmet{k_1} \times \cdots \times \Symmet{k_d}$ is feasible with optimal solution~$z$ if and only if there exist integers $t_i \in \zz$ and $s_{i,j} \in \{0,1\}$ for every $i \in \{1, \dots, d\}$ and $j \in \{1, \dots, k_i-1\}$ such that
\begin{equation}\label{eq:core-set-parametrization-An-Sn}
 z = \bigoplus_{i=1}^d \left( t_i \vones_{k_i} + \sum_{j=1}^{k_i-1} s_{i, j}\, c(j) \right),
\end{equation} 
under the constraint that $\sum_{j=1}^{k_i-1} s_{i,j} \leq 1$.
\end{proposition}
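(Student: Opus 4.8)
The plan is to show that the parametrized family in~\eqref{eq:core-set-parametrization-An-Sn} is exactly a set of representatives $\Rep(\core_\Gamma(\rr^n))$ for $\Gamma=\Symmet{k_1}\times\cdots\times\Symmet{k_d}$, after which both the feasibility and the optimality claims follow from the general results of Section~\ref{sec:coresets}. First I would recall the discussion preceding the proposition, resting on Corollary~\ref{cor:ilp-feasibility-coreset}: a $\Gamma$-symmetric ILP is feasible if and only if $P$ meets $\Rep(\core_\Gamma(\rr^n))$, and since the objective is constant on each $\Gamma$-orbit (orbits lie inside fibers, cf.\ Remark~\ref{rem:fiber-props}), an optimal solution can always be chosen among these representatives. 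Hence it suffices to identify the representative set explicitly and match it to~\eqref{eq:core-set-parametrization-An-Sn}.

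Next I would apply Corollary~\ref{cor:characterization_of_core_sets_for_direct_prod_of_An_Sn} with every factor a symmetric group $\Gamma_i=\Symmet{k_i}$. This gives that $\Rep(\core_\Gamma(\rr^n))$ consists precisely of the tuples $(t_1\vones+\sum_{j=1}^{r_1}e_j,\dots,t_d\vones+\sum_{j=1}^{r_d}e_j)$ with $t_i\in\zz$ and $0\le r_i<k_i$, the second summand in block $i$ being the chosen representative with $r_i$ leading ones. Writing $c(j)=\sum_{l=1}^{j}e_l$ for the vector with $j$ leading ones and $c(0)=0$, block $i$ of such a representative reads $t_i\vones_{k_i}+c(r_i)$.

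The heart of the argument is then the blockwise bijection between a choice $r_i\in\{0,1,\dots,k_i-1\}$ and a binary vector $(s_{i,1},\dots,s_{i,k_i-1})$ subject to $\sum_{j=1}^{k_i-1}s_{i,j}\le 1$. Given $r_i\ge 1$ I would set $s_{i,r_i}=1$ and all other entries to $0$, and take all $s_{i,j}=0$ when $r_i=0$; conversely the at-most-one constraint forces a unique active index (or none), recovering a unique $r_i$. Under this correspondence $\sum_{j=1}^{k_i-1}s_{i,j}\,c(j)=c(r_i)=\sum_{l=1}^{r_i}e_l$, so each block matches, and assembling the $d$ blocks via $\bigoplus$ reproduces exactly the tuple above. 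This identifies the family~\eqref{eq:core-set-parametrization-An-Sn} with $\Rep(\core_\Gamma(\rr^n))$, and the feasibility and optimality equivalences then follow from Corollary~\ref{cor:ilp-feasibility-coreset} together with orbit-constancy of the objective.

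I do not expect a genuine obstacle: the statement is a reformulation of the explicit characterization into an integer-programming-friendly encoding. The only points needing care are bookkeeping ones, namely matching the chosen representative $\sum_{l=1}^{r_i}e_l$ with the definition of $c(j)$, handling the empty-sum case $r_i=0$ consistently, and confirming via the product structure of Theorem~\ref{theo:direct_prod_of_core_sets} that the encoding acts independently on each block, so that a single at-most-one constraint per block is both necessary and sufficient.
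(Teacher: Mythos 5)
Your proposal is correct and follows essentially the same route as the paper: the paper derives the proposition from Corollary~\ref{cor:ilp-feasibility-coreset} (feasibility/optimality via core set representatives) combined with the explicit representative set of Corollary~\ref{cor:characterization_of_core_sets_for_direct_prod_of_An_Sn}, and then observes that \eqref{eq:core-set-parametrization-An-Sn} is just a reformulation of \eqref{eq:core-sets-An-Sn} with the $s_{i,j}$ selecting at most one $c(j)$ per block. Your only addition is spelling out the blockwise bijection between $r_i$ and the binary vector $(s_{i,1},\dots,s_{i,k_i-1})$, which the paper leaves implicit.
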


Note that \eqref{eq:core-set-parametrization-An-Sn} is just a reformulation of expression~\eqref{eq:core-sets-An-Sn} that is more convenient for the integer programming context.
In equation~\eqref{eq:core-set-parametrization-An-Sn} the term $c(j)$ denotes $\sum_{i=1}^j e_i$, which is a representative of the core set of $\hk{j}$.
We use the variables $s_{i,1}, \dots, s_{i, k_i-1}$ to select at most one vector $c(1), \dots, c(k_i-1)$.
By using the new variables $t_i$ and $s_{i,j}$ we are able to parametrize the set of core set representatives from Corollary~\ref{cor:characterization_of_core_sets_for_direct_prod_of_An_Sn}.

This means that we can replace the old variables $z_1, \dots, z_n$ by $n$ new variables $t_i$ and $s_{i,j}$.
By this transformation we reduce the search space (by reduction to core sets) and we eliminate all $\Gamma'$-symmetry (by reduction to core set representatives).
Moreover, we may remove inequality constraints during the execution of this transformation.
To illustrate the idea and to simplify notation, we assume that $\Gamma = \Symmet{n}$ is the full symmetric group.
Let
\begin{equation*}\label{eq:constraint-before-S_n-transformation}
 \langle a,z \rangle = \sum_{i=1}^n a_i z_i \leq b
\end{equation*}
be an inequality before the transformation.
Because $\Symmet{n}$ is an automorphism group of the problem, there is a permutation $\sigma$ such that 
\begin{equation}\label{eq:inequality-sort-descendingly}
a_{\sigma(1)} \geq a_{\sigma(2)} \geq \dots \geq a_{\sigma(n)} 
\end{equation}
and $\langle a^\sigma, z \rangle = \sum_{i=1}^n a_{\sigma(i)} z_i \leq b$
is a valid inequality.
Thus the condition $\langle a^\sigma, z(t,s_1, \dots, s_{n-1}) \rangle \leq b$ implies the constraint $\langle a, z(t,s_1, \dots, s_{n-1}) \rangle \leq b$ and all other constraints in its orbit.
Here we use the short notation $z(t,s_1, \dots, s_{n-1}) = t \vones_n + \sum_{j=1}^{n-1} s_j c(j)$ for the vector in~\eqref{eq:core-set-parametrization-An-Sn}.
As a byproduct we can eliminate all but one inequality from each orbit of inequalities by using the transformation.
By ordering coefficients block-wise descendingly as seen in \eqref{eq:inequality-sort-descendingly}, this elimination of constraints generalizes to the case of direct products of symmetric groups, which we discussed before.
As we have to touch every constraint anyway, this carries no significant overhead.

The parametrization idea and transformation induced by Proposition~\ref{prop:parametrization-symmetric-product} is not restricted to symmetric groups.
A similar statement is true for every group for which the complete core set or its representatives are known.
The simpler the core set can be parametrized, the fewer variables the transformation adds.
In the next section we will see how this approach works for symmetric groups in practice.

\section{Computational experiments}\label{sec:experiments}

To assess the practical feasibility of our proposed algorithms, we implemented prototypes.
First we describe our experiments with the approach discussed in Section~\ref{sec:sym-fibs}.

\subsection{Naïve fiber enumeration}\label{sec:experiments-fibers}

For Algorithm~\ref{alg:sym-fib} we use SCIP 2.0.1 \cite{scip} to enumerate all lattice points of the projected polyhedron.
Since we compare running times with commercial solvers which do not use exact arithmetic, this is a viable alternative to other lattice enumeration tools like \cite{latte} or \cite{normaliz}. 
Each enumerated point corresponds to a fiber. 
The integer feasibility of these fibers is tested by using core points.
Currently, our knowledge of core sets of groups beyond the alternating and the symmetric group and their direct products is limited.
Therefore we only implemented integer feasibility checks for these groups.
This core point check is realized in a dedicated program written in C++, which reads a polyhedron and a list of fibers.
It either returns an optimal fiber or reports that the input is infeasible.

Note that Algorithm~\ref{alg:sym-fib} is not always practically feasible.
Since we enumerate lattice points in the dimension~$d$ of the fixed space, the value of $d$ should not exceed about $10$ to remain tractable.
At the same time we have explicit complete core set descriptions for symmetric and alternating groups.
We therefore focus on problems with automorphism groups which are the product of ten or less symmetric groups.
Since we are not aware of problem instances in the literature meeting these conditions, we constructed problems ourselves.

We created random instances by the following scheme, using \cite{PermLib} and \cite{Polymake}.
For different values $d$ less than $10$ and different values of $k_1, \dots, k_d \in \nn_{\geq 1}$ we constructed ILPs in dimension $n = \sum_{i=1}^d k_i$ and with automorphism group $\Gamma = \Symmet{k_1}\times \cdots \times \Symmet{k_d}$.
We generated $3n$ inequalities $\langle a, x \rangle \leq b$ where 
$$a = \bigoplus_{i=1}^d f_i \left ( \sum_{j=1}^{k_i} a_{i,j} x_j \right ).$$
Here the $f_i$ were chosen independently uniformly at random from the set $\{1,\dots,20\}$.
The $a_{i,j}$ were zero with probability $0.1$ and otherwise selected uniformly at random from the set $\{5,\dots,15\}$.
The right hand side $b$ was set to $\lfloor 0.95 \cdot \langle a, \vones \rangle \rfloor$.
Finally, all inequalities in the orbit of $\Gamma$ were added and the domain of all variables set to $\zz_{\geq 0}$.
Additionally, to exclude the zero vector, we added the inequality $\sum_{i=1}^n x_i \geq 1$.
The objective function $c$ was chosen as $c = c_1 \vones_{k_1} \oplus \dots \oplus c_d \vones_{k_d}$ where the $c_i$ were chosen independently uniformly at random from the set $\{1,\dots,10\}$.

As the order of $\Gamma$ grows very quickly with $d$ and the $k_i$, we conducted our experiments only for selected values of these parameters.
For each $n \in \{10,12,15,18\}$ we tried to find three different groups $\Gamma$ each such that the number of constraints was comparable for different $n$.
We selected the parameters $d$ and $k_i$ so that we had one small instance and two large instances, the latter ones with different dimension $d$ of the fixed space.
The average ratio of non-zeros in the instances was about $90\%$, as was to be expected from the choice of random variables.

Table~\ref{tbl:random-experiments} shows the average results for $10$ randomly generated instances for every set of dimension parameters.
We performed the experiments on an Intel Core-i7 machine with eight logical CPUs at $2.8$ GHz and $16$ GB RAM.
We ran our tests with Gurobi 4.5.1 \cite{gurobi}, CPLEX 12.3 \cite{cplex} and our own fiber/core set-prototype.
We used the commercial solvers with their default settings and allowed $8$ threads.

\begin{table}
\begin{tabular}{lcclllll}
\hline
Groups & $n$ & $d$ & \#rows & Gurobi & CPLEX & Core\\
\hline
$(\Symmet{5})^2$ & 10 & 2 & 182151& 64.40 & 89.58 & 0.25\\
$\Symmet{5} \times \Symmet{3} \times \Symmet{2}$ & 10 & 3 & 23204  & 6.36 & 6.47 & 0.05\\
$\Symmet{8} \times (\id)^2 $ & 10 & 3 & 342289  & 170.39 & 1222.11 & 0.47\\
\hline
$(\Symmet{4})^3$ & 12 & 3 & 217273 & 123.77 & 129.34 & 0.36\\
$(\Symmet{3})^4$ & 12 & 4 & 28353 & 10.74 & 7.05 & 0.07\\
$\Symmet{6} \times \Symmet{4} \times (\id)^2$ & 12 & 4 & 236001  & 114.32 & 138.59 & 0.37\\
\hline
$(\Symmet{3})^5$ & 15 & 5 & 182366  &  136.90 & 107.94 & 0.41\\
$\Symmet{3} \times (\Symmet{2})^6$ & 15 & 7 & 11751 &  6.70 & 3.88 & 0.35\\
$(\Symmet{5})^2 \times (\id)^5$ & 15 & 7 & 267434  &  210.54 & 223.14 & 0.63\\
\hline
$(\Symmet{3})^4 \times (\Symmet{2})^3$ & 18 & 7 & 286732  &  304.81 & 278.72 & 1.48\\
$(\Symmet{2})^9$ & 18 & 9 & 18854 &  16.17 & 11.13 & 4.72\\
$\Symmet{5} \times \Symmet{3} \times (\Symmet{2})^4 \times (\id)^3$ & 18 & 9 & 315501 &  429.43 & 418.71 & 5.63\\
\hline
\end{tabular} 
\caption{Running times in seconds on random symmetrized instances, averaged on 10 runs each}
\label{tbl:random-experiments}
\end{table}

The results show that our code is faster than the commercial solvers on these instances.
We can also observe that the running time of our prototype increases significantly with $d$ because we have to enumerate lattice points in this dimension.
The input to our prototype included the symmetry group of the problem, so it did not have to be determined.
%As we do not have code that specifically detects symmetries which are a direct product of symmetric groups, we cannot tell how much of the running time advantage would be lost if the symmetry group were not known a-priori.

\subsection{Core set parametrization}

We also tested a transformation by Proposition~\ref{prop:parametrization-symmetric-product} on these instances.
This reduced the problems to instances with $60$ or less inequalities in dimensions $\{10,12,15,18\}$.
Since these are in general easy problems for ILP solvers,
we always have obtained the optimal solution in less than $0.1$ seconds regardless of the original problem size and the solver used.
As this transformation approach has no obvious limits on the problem size for which it is practically feasible, we were able to test it also on a real world problem.

Among all instances of the MIPLIB 2010 collection (cf.~\cite{miplib}) we looked for one which is small and whose symmetry group is large and consists to a large extent of a product of symmetric groups.
One of the candidates was {\tt toll-like}, a then open $0/1$-problem with $4408$ constraints in dimension $2883$.
Its symmetry group has $(\Symmet{2})^{230}$ as a subgroup.
After our transformation it had $4638$ constraints, still in dimension $2883$.
However, the presolvers of CPLEX, Gurobi and SCIP were able to eliminate $230$ variables, one for each $\Symmet{2}$ factor in the original problem.
Moreover, the number of constraints could be reduced to $3948$, which is $460$ less than in the original problem.
These reductions allowed us to solve this previously open problem with Gurobi 4.5.1 after about $4.5$ days on our workstation.
Under the same conditions solving the original, untransformed problem was not possible because both CPLEX and Gurobi ran into memory problems.

We thank the anonymous referee for a hint to the following analysis of the toll-like symmetry.
The instance contains 230 pairs of variables $x,y$ that appear in the following way:
\begin{equation}\label{eq:toll-like-inequalities}
\begin{split}
 x + R \geq 0,\qquad& y + R \geq 0,\\
 x - R \geq 0,\qquad& y - R \geq 0,
\end{split}
\end{equation}
where $R$ is some term in other variables.
In this case we can aggregate $x$ and $y$ into a single variable.
It is unclear why none of the tested solvers did perform this variable elimination that seems to be easily detectable without knowledge of core sets.
The transformation described in Section~\ref{sec:param} rewrites the contraints in such a way that solvers notice the redundancy in the model.

Since the problem contains only binary variables, the core set parametrization does not reduce the search space more than previously known techniques like symmetry-breaking inequalities.
Therefore it is an interesting question whether MIPLIB instances contain $\Symmet{k}$-symmetries on general integer variables that are not eliminated by current presolvers.
An ongoing collaboration with Marc Pfetsch~\cite{PfetschRehn2012} gave us access to the symmetry groups of most MIPLIB 2010 instances before and after presolving of SCIP and Gurobi.
All $\Symmet{k}$-symmetries on general integer variables in the original formulation are removed by the presolver of Gurobi.
However, presolving introduces new $\Symmet{2}$-symmetries on integer variables with non-binary bounds on five instances (\texttt{atlanta-ip}, \texttt{biella1}, \texttt{dc1c}, \texttt{msc98-ip}, \texttt{nsr8k}).
Subsequently, only parts of these symmetries are eliminated by a manually enforced presolving step, although all of them seem to be of a similar type as \eqref{eq:toll-like-inequalities}, which allows aggregation.

Also note that this variable aggregation is not possible for $\Symmet{k}$-symmetries in general, in fact not even for $\Symmet{2}$-symmetries.
Like in the instances from Section~\ref{sec:experiments-fibers}, in the following IP the two $\Symmet{2}$-symmetric variables $x$ and $y$ can not be replaced by a single variable because they appear with distinct non-zero coefficients in more than one inequality.
\begin{equation*}
\begin{split}
 \max\quad &x+y\\
 \text{s.t.}\quad & x + y \leq 5\\
& 5x + y \leq 16\\
& x + 5y \leq 16,\qquad\qquad x,y \in \zz
\end{split}
\end{equation*}
So far we have not encountered these kind of symmetries ``in the wild'', for instance, in MIPLIB.

\section{Discussion and open problems}\label{sec:discussion}

In this article we introduced the notion of core sets for convex integer optimization.
The integer feasibility test of a symmetric convex set can be reduced to its core set.
We discussed some structural properties of core sets.
In particular, we obtained a full description of core sets of direct products of symmetric groups.

Based on the core set idea, we proposed two different, straightforward approaches to solve integer linear programs.
First, we considered an algorithm using lattice point enumeration and core set based fiber feasibility tests to solve ILPs.
Our second approach used a parametrization of core sets. 
In this we substituted some of the variables by new ones to restrict the search space to the core set of the given problem.
We implemented prototypes of the proposed algorithms and compared them against commercial solvers, using problems whose symmetry group contains a direct product of symmetric groups.
Our na{\"i}ve first algorithm is faster than Gurobi and CPLEX on generated, highly symmetric problems.
With the second algorithm we were able to solve an open MIPLIB 2010 problem using Gurobi.
Future research is necessary to determine which other problem instances can benefit from these methods.

Many open questions remain:
\begin{itemize}
 \item How can we systematically obtain complete descriptions of core sets for groups other than $\Symmet{n}$ or $\Altern{n}$? 
       In particular, can we characterize groups which have a finite core set?
 \item What can we say about the core set of group compositions other than direct products, e.g. subdirect products and diagonal subgroups, which play an important role in combinatorial optimization problems? 
 \item Can our na{\"i}ve algorithm be improved, for instance by using additional polyhedral information and avoiding the unfavorable lattice point enumeration?
 \item Under what conditions is a core set parametrization based approach advantageous for symmetric problems?
 \item For what kind of symmetric non-linear integer optimization problems are core set based integer feasibility checks useful?
\end{itemize}

\section*{Acknowledgements}

The authors would like to thank the Fields Institute for its hospitality, where larger
parts of this research have been done during the Thematic Program on Discrete Geometry and
Applications.  Further, the authors would like to thank Mathieu Dutour Sikiri\'{c} and
Marc Pfetsch for valuable discussions and the anonymous referee for helpful remarks about
the computational results.  Research by Herr is supported by Studienstiftung des deutschen
Volkes.

\providecommand{\bysame}{\leavevmode\hbox to3em{\hrulefill}\thinspace}
\providecommand{\MR}{\relax\ifhmode\unskip\space\fi MR }
\providecommand{\MRhref}[2]{%
  \href{http://www.ams.org/mathscinet-getitem?mr=#1}{#2}
}
\providecommand{\href}[2]{#2}

\renewcommand{\refname}{Software}

\end{document}